\numberwithin{equation}{section}
\theoremstyle{definition}
\newtheorem{thm*}{Theorem}
\newtheorem{theorem}{Theorem}[section]
\newtheorem{definition}[theorem]{Definition}
\newtheorem{conj*}{Conjecture}
\newtheorem{prop}[theorem]{Proposition}
\newtheorem{lemma}[theorem]{Lemma}
\newtheorem{Counterex}[theorem]{Counterexample}
\newcommand{\exref}[1]{\hyperref[#1]{Example \ref{#1}}}
\newcommand{\figref}[1]{\hyperref[#1]{Figure \ref{#1}}}
\newcommand{\lemref}[1]{\hyperref[#1]{Lemma \ref{#1}}}
\newcommand{\thmref}[1]{\hyperref[#1]{Theorem \ref{#1}}}
\newcommand{\thmsref}[1]{\hyperref[#1]{Theorems \ref{#1}}}
\newcommand{\probref}[1]{\hyperref[#1]{Problem \ref{#1}}}
\newcommand{\conjref}[1]{\hyperref[#1]{Conjecture \ref{#1}}}
\newcommand{\propref}[1]{\hyperref[#1]{Proposition \ref{#1}}}
\newcommand{\corref}[1]{\hyperref[#1]{Corollary \ref{#1}}}
\newcommand{\defref}[1]{\hyperref[#1]{Definition \ref{#1}}}
\newcommand{\rmkref}[1]{\hyperref[#1]{Remark \ref{#1}}}
\newcommand{\rmksref}[1]{\hyperref[#1]{Remarks \ref{#1}}}
\newcommand{\qref}[1]{\hyperref[#1]{Question \ref{#1}}}
\newcommand{\secref}[1]{\hyperref[#1]{\S\ref{#1}}}
\newcommand{\appref}[1]{\hyperref[#1]{Appendix \ref{#1}}}
\newcommand{\tabref}[1]{\hyperref[#1]{Table \ref{#1}}}
\newcommand{\claimref}[1]{\hyperref[#1]{Claim \ref{#1}}}
\newcommand\pf{\begin{proof}}
\newcommand\epf{\end{proof}}
\newcommand\bp{\begin{pmatrix}}
\newcommand\ep{\end{pmatrix}}
\newcommand\ben{\begin{enumerate}}
\newcommand\een{\end{enumerate}}
\newcommand\be{\begin{equation}}
\newcommand\ee{\end{equation}}
\newcommand\benn{\begin{equation*}}
\newcommand\eenn{\end{equation*}}
\newcommand\bea{\begin{eqnarray}}
\newcommand\eea{\end{eqnarray}}
\newcommand\beann{\begin{eqnarray*}}
\newcommand\eeann{\end{eqnarray*}}
\newcommand\irr{\text{irr}}
\newcommand{\bP}{\mathbb{P}}
\newcommand{\Z}{\mathbb{Z}}
\newcommand{\cF}{\mathcal{F}}
\newcommand{\ind}{\operatorname{ind}}
\DeclareMathOperator{\ann}{ann}
\newcommand{\abs}[1]{\left\lvert#1\right\rvert}
\DeclareMathOperator{\codim}{codim}
\renewcommand{\ind}{\text{ind}}
\newcommand*{\Cdot}[1][1.25]{%
  \mathpalette{\CdotAux{#1}}\cdot%
}
\newdimen\CdotAxis
\newcommand*{\CdotAux}[3]{%
  {%
    \settoheight\CdotAxis{$#2\vcenter{}$}%
    \sbox0{%
      \raisebox\CdotAxis{%
        \scalebox{#1}{%
          \raisebox{-\CdotAxis}{%
            $\mathsurround=0pt #2#3$%
          }%
        }%
      }%
    }%
    \dp0=0pt %
    \sbox2{$#2\bullet$}%
    \ifdim\ht2<\ht0 %
      \ht0=\ht2 %
    \fi
    \sbox2{$\mathsurround=0pt #2#3$}%
    \hbox to \wd2{\hss\usebox{0}\hss}%
  }%
}
\newcommand{\CDot}{\Cdot[2]}
\title{On Virtually Cohen--Macaulay Simplicial Complexes}
\author{Nathan Kenshur, Feiyang Lin, Sean McNally, Zixuan Xu, Teresa Yu}
\date{\today}
\begin{document}

\maketitle

\begin{abstract}
    We examine virtual resolutions of Stanley--Reisner ideals for a product of projective spaces. In particular, we provide sufficient conditions for a simplicial complex to be virtually Cohen--Macaulay (to have a virtual resolution with length equal to its codimension). We also show that all balanced simplicial complexes are virtually Cohen--Macaulay.
\end{abstract}

\section{Introduction}
In \cite{BES}, Berkesch, Erman, and Smith introduced \textit{virtual resolutions} of graded modules to take into account the freedom given by nonmaximal irrelevant ideals in the Cox ring $S=k[\underline{x}]$ of most smooth projective toric varieties, such as a product of projective spaces. Meanwhile, there is a nice class of modules with the \textit{Cohen--Macaulay} property, which is characterized by having a minimal free resolution of length equal to their codimenion. While a significant amount of research has gone into characterizing Cohen--Macaulay modules, there is still little understanding of when modules are \textit{virtually Cohen--Macaulay}, meaning that they have virtual resolutions of length equal to their codimension. 

For $\vec{n} = (n_1,\ldots, n_r)$, let $\bP^{\vec n} = \bP^{n_1} \times \cdots \times \bP^{n_r}$ be a product of projective spaces. Let $S\vcentcolon=\Bbbk[x_{i,j}\mid 1 \leq i \leq r,\,0 \leq j \leq n_i]$ be the Cox ring of $\bP^{\vec n}$ over a field $\Bbbk$. Let $B \vcentcolon= \bigcap_{i = 1}^r \langle x_{i,0}, x_{i,1}, \ldots, x_{i,n_i} \rangle$ be the \textit{irrelevant ideal of $S$}. We endow $S$ with a $\Z^r$-grading given by $\deg(x_{i,j})= \bf{e}_i$, the $i^{\text{th}}$ standard basis vector. For convenience, we write $|\vec n| := n_1 + \cdots + n_r$. 

A simplicial complex $\Delta$ on $\bP^{\vec{n}}$ has vertices in $X_{\vec{n}} = \{x_{i,j}\mid 1\leq i\leq r,\,0\leq j\leq n_i\}$. When there is no potential confusion, we use $X$ as a shorthand for $X_{\vec{n}}$. We use \textit{component} to refer to a term $\mathbb{P}^{n_i}$ of the product of projective spaces. We also use the term component in the context of simplicial complexes to refer to the set of vertices of a complex associated to a component $\mathbb{P}^{n_i}$. Given a simplicial complex $\Delta$ on vertex set $X$ and a squarefree monomial ideal $I \subset S$, let $I_\Delta \subseteq \Bbbk[X]$ and $\Delta_I$ be, respectively, the squarefree monomial ideal and simplicial complex given by the Stanley--Reisner correspondence.

The saturation of $I$ by $B$ is the ideal $I : B^\infty := \bigcup_{k > 0} (I: B^k)$. A simpicial complex $\Delta$ is \textit{$B$-saturated} if $I_\Delta:B^\infty = I_\Delta$. 
Define $\Gamma_B(M) := \{r \in M \mid rB^k = 0\}.$ 

\begin{definition}\label{def::vres}
Let $I$ be an ideal in $S$. A graded free complex $\cF_{\CDot}\colon [F_0 \leftarrow F_1\leftarrow \cdots \leftarrow F_k \leftarrow 0]$ is a \textit{virtual resolution of $S/I$} if:
\begin{enumerate}
    \item $\sqrt{\ann \left(H_i \cF_{\CDot}\right)} \supseteq B$ for all $i>0$, and
    \item $\ann \left(H_0 \cF_{\CDot}/\Gamma_B(H_0 \cF_{\CDot})\right) = (S/I)/\Gamma_B(S/I)$.
\end{enumerate}
Alternatively, $\cF_{\CDot}$ is a virtual resolution of $S/I$ if $\widetilde{\cF_{\CDot}}$ is a locally free resolution of the sheaf $(S/I)^\sim$. 
In the special case where $F_0=S^1$, an equivalent formulation of the second condition is
\[\ann \left(H_0 \cF_{\CDot} : B^{\infty} \right)= I : B^{\infty}.\]
\end{definition}

\begin{definition}\label{defn:VCM}
Let $\codim I_\Delta := |\vec{n}| - \dim V(I_\Delta)$, where $V(I_\Delta)$ is the variety of $I_\Delta$ in $\bP^{\vec{n}}$. A $B$-saturated simplicial complex $\Delta$ in a product projective spaces $\bP^{\vec{n}}$ supported on  vertex set $X$ is \textit{virtually Cohen--Macaulay} if there exists a virtual resolution of $\Bbbk[\Delta]$ of length $\codim \left(I_{\Delta}\right)$.
\end{definition}

In this paper, we restrict our attention to a specific class of $S$-modules of the form $S/I_{\Delta}$, where $I_\Delta$ is a squarefree monomial ideal defined by a simplicial complex $\Delta$ and $S$ is an appropriate polynomial ring. A simplicial complex $\Delta$ is Cohen--Macaulay if $(S/I_\Delta)_{\langle \underline{x} \rangle}$ is a Cohen--Macaulay ring. A Cohen--Macaulay simplicial complex has many nice combinatorial properties and characterizations, such as Reisner's criterion and connections to purity, shellability, and gallery-connectedness. Here, we consider whether similar statements can be made about virtually Cohen--Macaulay simplicial complexes and show that virtually Cohen--Macaulay simplicial complexes are not necessarily gallery-connected. We also show in Counterexample~\ref{counterex} that one cannot obtain a Cohen--Macaulay complex from a virtually Cohen--Macaulay complex solely by manipulating irrelevant faces.

Let $\Delta$ be a pure simplicial complex on the product of projective spaces $\bP^{\vec{n}}$. A facet $F\in \Delta$ is \textit{balanced} if it contains exactly one vertex from every component. A simplicial complex is \textit{balanced} if all of its facets are balanced. We consider when balanced simplicial complexes are virtually Cohen--Macaulay and obtain the following main theorem.

\begin{theorem}\label{thm:balanced_implies_vcm}
If $\Delta$ is a pure and balanced simplicial complex on vertex set $V$ corresponding to the product of projective spaces $\bP^{\vec{n}}$ , then $\Delta$ is virtually Cohen--Macaulay.
\end{theorem}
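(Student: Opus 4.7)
The plan is to construct a virtual resolution of $S/I_\Delta$ of length $|\vec n|$ as a direct sum of Koszul complexes, one for each facet of $\Delta$.

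I first identify the codimension. Because $\Delta$ is pure and balanced, every facet $F$ contains exactly one vertex $x_{i,j_i(F)}$ from each component, so its associated prime
\[
P_F \;=\; \langle x_{i,k} : x_{i,k} \notin F \rangle
\]
is generated by exactly $|\vec n|$ variables, which form a regular sequence in $S$. Geometrically, $V(P_F)$ is the single point $p_F \in \bP^{\vec n}$. The Stanley--Reisner decomposition $I_\Delta = \bigcap_F P_F$ then gives $V(I_\Delta) = \{p_F : F \text{ a facet of } \Delta\}$. Two distinct balanced facets must differ in the chosen vertex of some component $i$, forcing $p_F$ and $p_{F'}$ to have different $\bP^{n_i}$-coordinates, so these points are pairwise distinct. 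In particular $V(I_\Delta)$ is zero-dimensional and $\codim I_\Delta = |\vec n|$.

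Next, for each facet $F$ I take $K_{\CDot}(F)$ to be the $\Z^r$-graded Koszul complex on the $|\vec n|$ generators of $P_F$; this is a minimal free resolution of $S/P_F$ of length exactly $|\vec n|$. Setting
\[
\cF_{\CDot} \;:=\; \bigoplus_{F \text{ a facet of } \Delta} K_{\CDot}(F),
\]
I obtain a length-$|\vec n|$ complex of $\Z^r$-graded free $S$-modules with $H_i\cF_{\CDot} = 0$ for $i>0$ and $H_0\cF_{\CDot} = \bigoplus_F S/P_F$. I then verify that $\cF_{\CDot}$ is a virtual resolution of $S/I_\Delta$ using the sheaf-theoretic reformulation of \defref{def::vres}. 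Sheafifying, $\widetilde{\cF_{\CDot}} = \bigoplus_F \widetilde{K_{\CDot}(F)}$ is a direct sum of locally free resolutions of the skyscraper sheaves $\cO_{p_F}$, hence itself a locally free resolution of $\bigoplus_F \cO_{p_F}$. It remains to identify this sheaf with $(S/I_\Delta)^\sim$: since the $p_F$ are pairwise distinct, for $F \neq F'$ the sum $P_F + P_{F'}$ cuts out the empty set in $\bP^{\vec n}$, so $\sqrt{P_F + P_{F'}} \supseteq B$. Applied to the Chinese Remainder exact sequence
\[
0 \to S\big/\textstyle\bigcap_F P_F \longrightarrow \bigoplus_F S/P_F \longrightarrow C \to 0,
\]
this shows the cokernel $C$ is annihilated by a power of $B$ and so vanishes upon sheafification, yielding $(S/I_\Delta)^\sim \cong \bigoplus_F \cO_{p_F}$. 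Thus $\cF_{\CDot}$ is a virtual resolution of $S/I_\Delta$ of length $\codim I_\Delta = |\vec n|$.

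The main obstacle is this sheaf-level identification step: one must carefully confirm that saturating by the nonmaximal irrelevant ideal $B$ really does collapse the Chinese Remainder cokernel $C$, and then reconcile the resulting sheaf isomorphism with the module-level condition (2) of \defref{def::vres} via the $\Gamma_B$-quotient (since the standard ``$F_0 = S^1$'' shortcut does not apply here, as $F_0 = \bigoplus_F S$). The remaining ingredients --- the codimension computation and the assembly of Koszul complexes --- are essentially immediate once the balancing hypothesis has been unpacked.
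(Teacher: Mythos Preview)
Your argument is correct, and it is genuinely different from the paper's. You exploit the key geometric fact that a balanced facet cuts out a single reduced point of $\bP^{\vec n}$, so $V(I_\Delta)$ is a finite set of distinct points; then a direct sum of Koszul complexes on the complementary variables of each facet gives a length-$|\vec n|$ graded free complex whose sheafification resolves $\bigoplus_F \cO_{p_F}\cong (S/I_\Delta)^\sim$. The Chinese Remainder step is fine: for $F\neq F'$ the facets differ in some component $i$, and then $P_F+P_{F'}$ already contains the entire ideal $\langle x_{i,0},\dots,x_{i,n_i}\rangle\supseteq B$, so the cokernel $C$ is $B$-torsion and $\widetilde C=0$. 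Since $H_i(\cF_\bullet)=0$ for $i>0$ and $\widetilde{H_0(\cF_\bullet)}\cong(S/I_\Delta)^\sim$, the sheaf-theoretic formulation of \defref{def::vres} applies directly, and you are right that the $F_0=S^1$ shortcut is unavailable but also unnecessary here.

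By contrast, the paper proves the theorem by adjoining to $\Delta$ the \emph{irrelevant complex} $\Delta_{\irr}(\vec n)$ (all $(r{-}1)$-faces with exactly one repeated color) and exhibiting, via a long explicit case analysis (\propref{prop:irrshelling}), a shelling of $\Delta_{\irr}(\vec n)\cup\{R\}$ for one balanced facet $R$; the remaining balanced facets can then be appended in any order since every ridge of a balanced facet lies in $\Delta_{\irr}(\vec n)$. Shellability gives Cohen--Macaulayness of the enlarged complex, and \lemref{lem:virtualequivalence} transfers this to a short virtual resolution of $S/I_\Delta$. The paper's route yields more: it produces a genuine Cohen--Macaulay squarefree monomial ideal with the same variety (so the virtual resolution is an honest minimal free resolution with $F_0=S$), in line with the framework of \thmref{thm:radicalVCM}. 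Your route is dramatically shorter and more conceptual, but the resulting virtual resolution has $F_0=S^{\#\{\text{facets}\}}$ and is not the free resolution of any cyclic $S$-module; in particular it does not show that $\Delta$ can be made Cohen--Macaulay by adding irrelevant faces.
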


\subsection*{Outline.}
The rest of the paper is organized as follows.
In \secref{sec:background}, we provide necessary background on Stanley--Reisner theory and virtual resolutions. We also show in \secref{sec:background} that virtually Cohen--Macaulay complexes are pure after removing irrelevant facets. In \secref{sec:partial_char_vcm}, we show that if a short virtual resolution is found for $S/I_\Delta$ by taking the free resolution of some module, then there exists a way of adding irrelevant faces to the resulting simplicial complex $\Delta$ so that it has a short free resolution. We give a partial characterization (Theorem~\ref{thm:radicalVCM}) of virtually Cohen--Macaulay complexes for which we can obtain a short virtual resolution by intersecting $I_\Delta$ with ideals with empty variety, such as the irrelevant ideal. 
Finally, we restrict ourselves to the special case of balanced complexes in \secref{sec:balanced} and show that every balanced complex is in fact virtually Cohen--Macaulay.

\section{Background}\label{sec:background}

In this section, we provide some preliminary results on Stanley--Reisner theory and virtual resolutions.

Notice that when $I_{\Delta}$ is $B$-saturated, we have that $n-\dim V(I_{\Delta})=n+r-\dim_{\mathbb{A}^{n+r}}I_{\Delta}$, which means that the geometric definition of codimension  coincides with the definition in terms of the Krull dimension of $I_{\Delta}$ in the Cox ring. Since a minimal free resolution of a module is also a virtual resolution of the module, all Cohen--Macaulay simplicial complexes are virtually Cohen--Macaulay. The following lemma provides an important method for obtaining virtual resolutions of some $S/I$.

\begin{lemma}\label{lem::varvres}
If $I,J \subseteq S$ are ideals such that $V(I)=V(J)$, then any free resolution $\cF_{\CDot}$ of $S/J$ is a virtual resolution of $S/I$. \qed
\end{lemma}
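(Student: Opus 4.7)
The plan is to verify the two conditions of Definition~\ref{def::vres} directly, with the second containing all of the content. Condition (1) is automatic: since $\cF_{\CDot}$ is an exact free resolution of $S/J$, all higher homologies $H_i(\cF_{\CDot})$ vanish for $i > 0$, hence $\ann(H_i \cF_{\CDot}) = S$ and $\sqrt{S} = S \supseteq B$ trivially.

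For condition (2), $H_0 \cF_{\CDot} = S/J$ is cyclic with annihilator $J$, so the required equality reduces to $J : B^\infty = I : B^\infty$. Equivalently, the sheafifications $\widetilde{S/I}$ and $\widetilde{S/J}$ agree on $\bP^{\vec{n}}$. This is where the hypothesis $V(I) = V(J)$ enters: under the standard correspondence between closed subschemes of $\bP^{\vec{n}}$ and $B$-saturated ideals in $S$, equality of the subschemes cut out by $I$ and $J$ is equivalent to the equality of their $B$-saturations.

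The cleanest way to package the argument is via the alternative, sheaf-theoretic characterization of virtual resolution in Definition~\ref{def::vres}. Sheafification is exact, so $\widetilde{\cF_{\CDot}}$ is a locally free resolution of $\widetilde{S/J}$; by the previous paragraph this sheaf agrees with $\widetilde{S/I}$, so $\widetilde{\cF_{\CDot}}$ locally freely resolves $\widetilde{S/I}$ as well, and $\cF_{\CDot}$ is a virtual resolution of $S/I$.

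The one step to handle with care is the interpretation of $V(I) = V(J)$. Reading it as equality of closed subschemes of $\bP^{\vec{n}}$, which is the natural convention when working with ideals in the Cox ring, delivers precisely the equality of $B$-saturations needed above; a purely set-theoretic reading would only furnish equality of the radicals of the saturations, which is not enough. Once this interpretation is fixed, the remainder is a direct unwinding of definitions.
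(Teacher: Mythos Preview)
The paper states this lemma without proof (note the \qed immediately following the statement), so there is nothing to compare against; your argument correctly supplies the omitted details. Verifying condition~(1) via exactness and then invoking the sheaf-theoretic formulation of Definition~\ref{def::vres} for condition~(2) is exactly the right route: sheafification is exact, so $\widetilde{\cF_{\CDot}}$ resolves $\widetilde{S/J}$, and the hypothesis identifies this sheaf with $\widetilde{S/I}$.

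Your closing caveat about interpreting $V(I)=V(J)$ is a genuine and important point. The lemma is true only under the scheme-theoretic reading (equivalently $I:B^{\infty}=J:B^{\infty}$); under a purely set-theoretic reading it already fails for $I=(x_0)$ and $J=(x_0^2)$ in $\bP^1$. In the paper's actual uses of the lemma---Stanley--Reisner ideals, which are radical, and intersections $I\cap J$ with $V(J)=\varnothing$, where one checks directly that $(I\cap J):B^{\infty}=I:B^{\infty}$---the two readings coincide, so no error propagates. But you are right that the statement, as written, implicitly requires the stronger interpretation.
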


Lemma~\ref{lem::varvres} tells us that in the virtual context, one is ``free'' to modify an ideal up to having the same variety. By the Stanley--Reisner correspondence, this leads to the notion of irrelevant and relevant faces in a simplicial complex. A face is irrelevant if its presence or absence does not affect the variety of the ideal correponding to the simplicial complex. This is equivalent to the following definition.

We say that a face of a simplicial complex $\Delta$ is \textit{relevant} if it contains at least one vertex from every component of $\mathbb{P}^{\vec{n}}$; otherwise, we say it is \textit{irrelevant}.

The following lemma provides a useful method for obtaining virtually Cohen--Macaulay simplicial complexes.

\begin{lemma}\label{lem:virtualequivalence}
Let $\Delta, \Delta'$ be two simplicial complexes in $\bP^{\vec{n}}$ such that $\Delta\subseteq\Delta'$ and $\Delta'\setminus \Delta$ contains only irrelevant faces. Then the free resolution of $S/I_{\Delta'}$ is a virtual resolution of $S/I_{\Delta}$.
\end{lemma}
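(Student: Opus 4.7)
The plan is to reduce to \lemref{lem::varvres} by showing $V(I_\Delta) = V(I_{\Delta'})$ in $\bP^{\vec n}$; once this is established, any free resolution of $S/I_{\Delta'}$ is automatically a virtual resolution of $S/I_\Delta$. One containment is formal: since the Stanley--Reisner correspondence is inclusion-reversing, $\Delta \subseteq \Delta'$ yields $I_{\Delta'} \subseteq I_\Delta$, and hence $V(I_\Delta) \subseteq V(I_{\Delta'})$ as subsets of $\bP^{\vec n}$.

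For the reverse containment, I would use the standard decomposition of a Stanley--Reisner variety as a union of coordinate subspaces. For each face $F \in \Delta$, let $L_F \subseteq \bP^{\vec n}$ be the coordinate subspace cut out by the vanishing of the variables $x_{i,j}$ with $x_{i,j}\notin F$. Then $V(I_\Delta) = \bigcup_{F \in \Delta} L_F$. The key observation, which is essentially the content of the definition of ``relevant'' face used in the paper, is that $L_F$ is nonempty in $\bP^{\vec n}$ precisely when $F$ meets every component of $\bP^{\vec n}$: otherwise, some factor $\bP^{n_i}$ would require all its homogeneous coordinates to vanish, which is impossible. Thus
\[
V(I_\Delta) \;=\; \bigcup_{\substack{F \in \Delta \\ F \text{ relevant}}} L_F,
\]
and likewise for $\Delta'$.

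Now by hypothesis $\Delta' \setminus \Delta$ contains only irrelevant faces, so $\Delta$ and $\Delta'$ have exactly the same set of relevant faces. The displayed description then immediately gives $V(I_\Delta) = V(I_{\Delta'})$, completing the verification of the hypothesis of \lemref{lem::varvres} and hence the lemma. The only mildly subtle step is the bookkeeping that irrelevant faces contribute empty coordinate subspaces in $\bP^{\vec n}$; everything else is the inclusion-reversing behavior of the Stanley--Reisner correspondence together with a direct appeal to \lemref{lem::varvres}.
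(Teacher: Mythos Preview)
Your proof is correct and follows exactly the approach implicit in the paper. The paper does not give an explicit proof of this lemma; it states it immediately after explaining that a face is irrelevant precisely when its presence or absence does not affect the variety, so the lemma is meant to follow at once from \lemref{lem::varvres}. Your argument supplies the details of this reduction and matches the paper's intended reasoning.
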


For $\Delta$ supported on $X$ and $A \subseteq X$, by definition of Stanley--Reisner ideals, we have that $Q = \langle x_i\mid x_i \in A\rangle$ is an associated prime of $I_\Delta$ if and only if $X \setminus A$ is a facet of $\Delta$. Moreover, $\codim (Q) = |\vec{n}| + r - 1 - \dim (X \setminus A)$. Thus, $\Delta$ being pure up to irrelevant facets is equivalent to the following proposition.

\begin{prop}
Let $\Delta$ be a simplicial complex on the product projective space $\mathbb{P}^{\vec n} = \mathbb{P}^{n_1}\times \cdots \times \mathbb{P}^{n_r}$, where $\vec n = (n_1, \dots, n_r)$. Let $F := [F_0 \leftarrow F_1 \cdots \leftarrow F_p \leftarrow 0]$ be a virtual resolution of $S/I_{\Delta}$ such that $p = |\vec n| - \dim V(I_\Delta)$. Then if $Q$ is an associated prime of $I_\Delta$ that does not contain the irrelevant ideal $B$, then $\codim Q = p$.
\end{prop}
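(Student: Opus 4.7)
The plan is to localize the virtual resolution $\cF_{\CDot}$ at the associated prime $Q$, apply the Auslander--Buchsbaum formula to bound the height of $Q$, and pair this with the obvious inequality coming from the containment $I_\Delta\subseteq Q$.

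First, I would verify that $(\cF_{\CDot})_Q$ is an honest free resolution of $(S/I_\Delta)_Q$ of length at most $p$. Since $B\not\subseteq Q$, choose $b\in B\setminus Q$. For each $i>0$, the first clause of Definition~\ref{def::vres} gives $B\subseteq\sqrt{\ann(H_i\cF_{\CDot})}$, so some power $b^k$ annihilates $H_i\cF_{\CDot}$; as $b^k\notin Q$, we obtain $(H_i\cF_{\CDot})_Q=0$. Similarly, any element of $\Gamma_B(M)$ is killed by a power of $b$, so $\Gamma_B(M)_Q=0$ for any $S$-module $M$; localizing the second clause of Definition~\ref{def::vres} (or, more cleanly, invoking the equivalent sheaf-theoretic formulation and observing that $Q$ corresponds to a point of $\bP^{\vec n}$ at which $\widetilde{\cF_{\CDot}}$ resolves $\widetilde{S/I_\Delta}$) yields $(H_0\cF_{\CDot})_Q\cong (S/I_\Delta)_Q$.

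Next, I would apply Auslander--Buchsbaum over the regular local ring $S_Q$. Since $S$ is a polynomial ring, $\operatorname{depth}(S_Q)=\operatorname{ht}(Q)$. Because $Q\in\Ass(S/I_\Delta)$, the image $QS_Q$ lies in $\Ass_{S_Q}((S/I_\Delta)_Q)$, forcing $\operatorname{depth}_{S_Q}((S/I_\Delta)_Q)=0$. Combining this with the bound $\operatorname{pd}_{S_Q}((S/I_\Delta)_Q)\le p$ from the previous step, the formula produces $\operatorname{ht}(Q)\le p$. For the reverse inequality, $I_\Delta\subseteq Q$ gives $V(Q)\subseteq V(I_\Delta)$, hence $\codim Q\ge\codim I_\Delta=p$. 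The projective codimension of a homogeneous prime $Q\not\supseteq B$ equals its height in $S$ (in the combinatorial description recalled just before the proposition, both are $|\vec n|+r-|X\setminus A|$ when $Q=\langle x_i\mid x_i\in A\rangle$), so $\codim Q=\operatorname{ht}(Q)=p$.

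The main obstacle I expect is the first step: the second clause of Definition~\ref{def::vres} is phrased as an annihilator/saturation identity rather than an outright isomorphism, so some care is needed to extract the clean statement $(H_0\cF_{\CDot})_Q\cong(S/I_\Delta)_Q$ upon localization. The tidiest workaround is to use the sheaf-theoretic reformulation of virtual resolution, which makes the identification on stalks automatic for every point of $\bP^{\vec n}$, and in particular for the point corresponding to $Q$. After this, the remaining argument is just standard dimension-theoretic bookkeeping combining Auslander--Buchsbaum with the containment $I_\Delta\subseteq Q$.
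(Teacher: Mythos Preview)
Your argument is correct. The paper's own proof is shorter only because it outsources the key inequality $\codim Q \le p$ to Proposition~2.5(i) of \cite{BES}; your localization-plus-Auslander--Buchsbaum step is essentially a reproof of that proposition from scratch. Both proofs obtain the reverse inequality $\codim Q \ge p$ the same way, from the containment $I_\Delta \subseteq Q$ and the resulting inclusion $V(Q)\subseteq V(I_\Delta)$. So the two approaches coincide in substance; yours is self-contained, the paper's is by citation. Your caveat about extracting $(H_0\cF_{\CDot})_Q\cong (S/I_\Delta)_Q$ from the second clause of Definition~\ref{def::vres} is well placed, and the sheaf-theoretic reformulation you invoke is exactly the clean way around it.
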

\begin{proof}
For any associated prime $Q$ of $I_\Delta$, we have that $V(I_\Delta) \supseteq V(Q)$ and $\dim V(I_\Delta) \geq \dim V(Q)$, so $\codim Q = \abs{\vec n} - \dim V(Q) \geq \abs{\vec n} - \dim V(I_\Delta) = p$. But by Proposition 2.5(i) of \cite{BES}, an associated prime $Q$ of $I_\Delta$ that does not contain the irrelevant ideal $B$ satisfies $\codim Q \leq p$. So $\codim Q = p$.
\end{proof}

We now examine the property of gallery-connectedness in relation to the property of being virtually Cohen--Macaulay. 

A pure simplicial complex is \textit{gallery-connected} if for any two facets $F,F' \in \Delta$, there exists a path of facets $F = F_1, \dots, F_{n-1}, F_n = F'$ such that for all $1 \leq i \leq n-1$, the intersection $F_i \cap F_{i+1}$ has codimension 1 in $\Delta$. It is well-known that a Cohen--Macaulay simplicial complex is gallery-connected. Hence one would hope that a virtually Cohen--Macaulay complex is also gallery-connected, at least up to adding irrelevant faces. However, the simplicial complex in Figure~\ref{fig:2tetrahedron} provides a counterexample. This complex is on vertices $\{a,b,c,d,e,f\}$ and is contained in $\mathbb{P}^2\times\mathbb{P}^2$, with vertices $\{a,b,c\}$ in the first component and $\{d,e,f\}$ in the second component. It is virtually Cohen--Macaulay, and a short virtual resolution, with graded shifts surpressed, is below. 

\begin{equation*}\label{eqn: eg_vres} 
S^{2}\,
      \xleftarrow{\begin{pmatrix}
      0&f&0&a\\
      {-c}&{-f}&b&{-a}\end{pmatrix}}\,S^{4} \xleftarrow{\begin{pmatrix}
      0&{-b}\\
      a&0\\
      0&{-c}\\
      {-f}&0\end{pmatrix}}\,S^{2}\\
      \xleftarrow\,0
\end{equation*} 
Note that the degrees of $1$ in the two components of the first module are both $\vec{0}$. But the simplicial complex is not gallery-connected because there is no path between the facets $\{a,d,e,f\}$ and $\{b,c,d,e\}$ that satisfies the necessary conditions. Moreover, one cannot modify the complex by adding or removing irrelevant faces to obtain a gallery-connected complex since irrelevant faces are at most two-dimensional.
\begin{figure}
    \centering
    \includegraphics{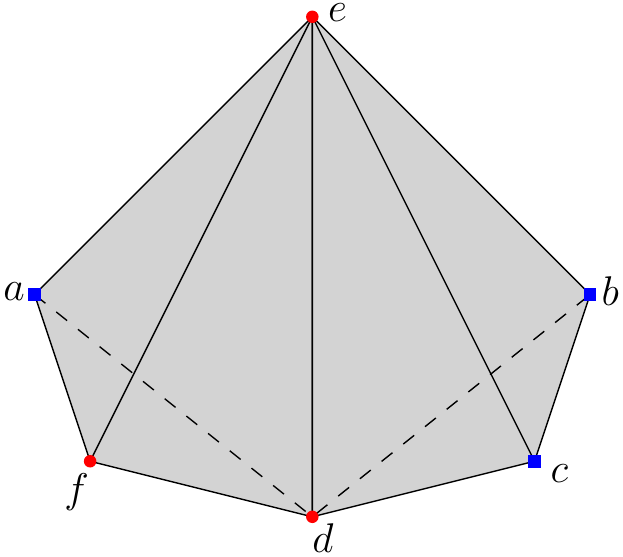}
    \caption{Two tetrahedra glued along an edge.}
    \label{fig:2tetrahedron}
\end{figure}

\section{Free resolutions as virtual resolutions}\label{sec:partial_char_vcm}

In this section, we explore a specific situation where it is sufficient to examine intersections with $I_\Delta$ by squarefree monomial ideals in order to find a short virtual resolution. We first state the main result of the section.

\begin{theorem}\label{thm:radicalVCM}
    Let $\Delta$ be a simplicial complex on the product of projective spaces $\bP^{\vec{n}} = \bP^{n_1}\times\cdots\times \bP^{n_r}$. If there exists $J$ a monomial ideal in $S$ with $V(J) = \varnothing$ such that $S/(I_\Delta\cap J)$ is Cohen--Macaulay, then there exists a simplicial complex $\Delta'$ containing only irrelevant facets such that $\sqrt{J} = I_{\Delta'}$ and  $\Delta \cup \Delta'$ is Cohen--Macaulay. Further, this implies that the complex $\Delta$ is virtually Cohen--Macaulay.
\end{theorem}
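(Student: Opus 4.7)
The plan is to construct $\Delta'$ explicitly from $\sqrt J$, identify $I_{\Delta\cup\Delta'}$ with the radical of $I_\Delta\cap J$, promote the Cohen--Macaulay property from $S/(I_\Delta\cap J)$ to $S/I_{\Delta\cup\Delta'}$, and then use \lemref{lem::varvres} to extract a virtual resolution of $S/I_\Delta$ of the correct length.

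First, because $\sqrt J$ is a squarefree monomial ideal, by the Stanley--Reisner correspondence it equals $I_{\Delta'}$ for a unique simplicial complex $\Delta'$ on $X$. To see that every facet of $\Delta'$ is irrelevant, suppose some facet $X\setminus A$ were relevant; then the associated prime $\langle x_i:x_i\in A\rangle$ of $\sqrt J$ would fail to contain $B$ and would therefore cut out a nonempty subscheme of $\bP^{\vec n}$, contradicting $V(\sqrt J)=V(J)=\varnothing$. Next, the intersection of Stanley--Reisner ideals corresponds to the union of their complexes, so $I_{\Delta\cup\Delta'}=I_\Delta\cap I_{\Delta'}=I_\Delta\cap\sqrt J$; and because radicals distribute over intersections, this equals $\sqrt{I_\Delta\cap J}$.

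The heart of the argument is to show that $S/I_{\Delta\cup\Delta'}$ is Cohen--Macaulay. I would appeal to the fact that for a monomial ideal $I$ with $S/I$ Cohen--Macaulay, the quotient $S/\sqrt I$ is again Cohen--Macaulay; this is naturally established through polarization, which sends a monomial ideal to a squarefree one in a larger polynomial ring while preserving depth and projective dimension, together with a comparison between the polarized complex and the complex of $\sqrt I$. Applied to $I_\Delta\cap J$, this yields the Cohen--Macaulayness of $S/\sqrt{I_\Delta\cap J}=S/I_{\Delta\cup\Delta'}$. This step is the main obstacle, since the behavior of the Cohen--Macaulay property under taking radicals of non-squarefree monomial ideals is subtle and requires either the full machinery of polarization or a direct depth calculation leveraging the specific primary decomposition of $I_\Delta\cap J$.

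Finally, since $V(I_{\Delta'})=V(\sqrt J)=\varnothing$, we have $V(I_{\Delta\cup\Delta'})=V(I_\Delta)\cup V(I_{\Delta'})=V(I_\Delta)$, and consequently $\codim(I_{\Delta\cup\Delta'})=\codim(I_\Delta)$. Invoking \lemref{lem::varvres} with the ideal pair $I_\Delta$ and $I_{\Delta\cup\Delta'}$, the minimal free resolution of $S/I_{\Delta\cup\Delta'}$ is a virtual resolution of $S/I_\Delta$; the Cohen--Macaulay property of $\Delta\cup\Delta'$ ensures that this resolution has length $\codim(I_{\Delta\cup\Delta'})=\codim(I_\Delta)$, exhibiting $\Delta$ as virtually Cohen--Macaulay.
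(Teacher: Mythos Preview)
Your proof is correct and follows essentially the same route as the paper: construct $\Delta'$ via Stanley--Reisner from $\sqrt{J}$, identify $I_{\Delta\cup\Delta'}=\sqrt{I_\Delta\cap J}$, invoke the Herzog--Takayama--Terai result that $S/I$ Cohen--Macaulay implies $S/\sqrt{I}$ Cohen--Macaulay for monomial $I$, and conclude via \lemref{lem::varvres}. The paper simply cites that radical step as a black box (their Theorem~\ref{thm:radicalCM}) rather than sketching the polarization argument, and it does not spell out the irrelevant-facets verification you include, but the logical skeleton is identical.
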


We will use the following two technical results in the proof of Theorem~\ref{thm:radicalVCM}.

\begin{lemma}\label{lem:intersectionCM}
Let $I$ be an ideal of the Cox ring $S$ of the product of projective spaces $\bP^{\vec{n}} = \bP^{n_1}\times\cdots\times \bP^{n_r}$. If there exists an ideal $J\subseteq S$ with $V(J) = \varnothing$ such that $S/(I\cap J)$ is Cohen--Macaulay, then $S/I$ is virtually Cohen--Macaulay.
\end{lemma}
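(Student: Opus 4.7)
My plan is to exhibit the minimal free resolution of $S/(I \cap J)$ as the desired short virtual resolution of $S/I$. Since $V(J) = \varnothing$ in $\bP^{\vec n}$, we have $V(I \cap J) = V(I) \cup V(J) = V(I)$ in $\bP^{\vec n}$, so by \lemref{lem::varvres} the minimal free resolution $\cF_{\CDot}$ of $S/(I \cap J)$ is a virtual resolution of $S/I$. It remains to verify that $\len \cF_{\CDot} = \codim I := |\vec n| - \dim V(I)$.

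By Auslander--Buchsbaum together with the Cohen--Macaulay hypothesis, $\len \cF_{\CDot} = \operatorname{pd}_S(S/(I \cap J)) = \dim S - \dim S/(I \cap J) = \operatorname{height}(I\cap J) =: c$. Unmixedness of the Cohen--Macaulay quotient forces every minimal prime of $I \cap J$ to have height exactly $c$.

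The crux is to identify $c$ with $\codim I$. I would show that every minimal prime $\mathfrak{p}$ of $I$ with $\mathfrak{p} \not\supseteq B$ is also a minimal prime of $I \cap J$. If $\mathfrak{p}' \subsetneq \mathfrak{p}$ were a prime containing $I \cap J \supseteq IJ$, then $\mathfrak{p}'$ would contain $I$ or $J$: the first case contradicts minimality of $\mathfrak{p}$, and the second gives $\mathfrak{p}' \supseteq \sqrt{J} \supseteq B$ (since $V(J) = \varnothing$ in $\bP^{\vec n}$ forces $\sqrt{J} \supseteq B$), whence $\mathfrak{p} \supseteq B$, a contradiction. Each such $\mathfrak{p}$ therefore has height $c$. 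Because a prime not containing $B$ is automatically $B$-saturated, the observation at the start of \secref{sec:background} applies to $\mathfrak{p}$ to give $\operatorname{height}(\mathfrak{p}) = \codim_{\bP^{\vec n}} V(\mathfrak{p})$; since $\dim V(I)$ is realized by at least one such minimal prime, we obtain $\codim I = c$, as desired.

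The main obstacle is this final codimension comparison. Both hypotheses enter essentially: Cohen--Macaulayness of $S/(I \cap J)$ supplies the equidimensionality needed to read $\codim I$ off any relevant minimal prime, while $V(J) = \varnothing$ in $\bP^{\vec n}$ prevents any minimal prime of $J$ from sitting strictly below a non-$B$-containing minimal prime of $I$ and thereby lowering $\operatorname{height}(I \cap J)$ away from $\codim I$.
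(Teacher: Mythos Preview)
Your proposal is correct and follows the same strategy as the paper: invoke \lemref{lem::varvres} to see that the minimal free resolution of $S/(I\cap J)$ is a virtual resolution of $S/I$, and then check the length. The paper's proof is terser on the length check, simply recording $\codim(I\cap J)\le\codim(I)$ and concluding (implicitly using either that every virtual resolution has length at least $\codim I$, or that with the geometric definition $V(I\cap J)=V(I)$ gives equality outright); your unmixedness argument pinning down $\operatorname{height}(\mathfrak p)=c$ for each relevant minimal prime is a more explicit route to the same equality $\operatorname{height}(I\cap J)=\codim I$, but not a different idea.
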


\begin{proof}
Since $S/(I\cap J)$ has a free resolution of length $\codim (I\cap J)$, note that $\codim (I\cap J)\leq \codim (I)$, and by Lemma~\ref{lem::varvres}, the free resolution of $S/(I\cap J)$ is a virtual resolution of length $\codim (I)$ of $S/I$.
\end{proof}

Lemma~\ref{lem:intersectionCM} provides one way to find virtual resolutions of a given ideal. The following theorem provides another useful fact that allows us to focus on squarefree monomial ideals when checking the Cohen--Macaulay property, by passing to the radical of an arbitrary monomial ideal.

\begin{theorem}[\cite{HTT05}, Theorem 2.6]\label{thm:radicalCM}
Let $I$ be a monomial ideal of the polynomial ring $S= \Bbbk[x_1,\ldots, x_n]$ over a field $\Bbbk$. If $S/I$ is Cohen--Macaulay, then $S/\sqrt{I}$ is also Cohen--Macaulay.
\end{theorem}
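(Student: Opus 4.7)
The plan is to establish the statement by comparing multigraded local cohomology modules via the combinatorial formulas of Hochster and Takayama, which express the Hilbert series of $H^i_{\mathfrak{m}}(S/I)$ in terms of reduced simplicial cohomology of auxiliary complexes. Since $I$ and $\sqrt{I}$ have identical minimal primes, they define the same variety, and in particular $\dim(S/I) = \dim(S/\sqrt{I}) =: d$. Thus to deduce Cohen--Macaulayness of $S/\sqrt{I}$ it suffices to prove $H^i_{\mathfrak{m}}(S/\sqrt{I}) = 0$ for all $i < d$, granted the analogous vanishing for $S/I$.

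The first step is to recall Hochster's formula for squarefree ideals: for $I' = \sqrt{I}$ with Stanley--Reisner complex $\Delta$, the multigraded component $H^i_{\mathfrak{m}}(S/I')_a$ depends only on $F := \{j : a_j = 0\}\in \Delta$ and agrees with $\tilde{H}^{i-|F|-1}(\mathrm{link}_{\Delta}(F); \Bbbk)$. Consequently, the vanishing of $H^i_{\mathfrak{m}}(S/\sqrt{I})$ for $i < d$ is equivalent to Reisner's criterion, and it suffices to test vanishing at a single representative squarefree multidegree $a_F \in \{0,-1\}^n$ for each face $F \in \Delta$.

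The second step is to invoke Takayama's generalization, which computes $\dim_{\Bbbk} H^i_{\mathfrak{m}}(S/I)_a$ for an arbitrary monomial ideal as the reduced cohomology of an explicit simplicial complex $\Delta_a(I)$ built from the irredundant primary decomposition of $I$. The crux of the argument is the key lemma: for any representative squarefree multidegree $a_F \in \{0,-1\}^n$, one has $\Delta_{a_F}(I) = \Delta_{a_F}(\sqrt{I})$. The intuition is that in squarefree degree the combinatorial recipe for $\Delta_a(I)$ records only which variables appear in each primary component --- i.e., the associated primes --- and not the exponents to which those variables are raised. Crucially, Cohen--Macaulayness of $S/I$ forces unmixedness, so the associated primes of $I$ are exactly its minimal primes, which coincide with those of $\sqrt{I}$; hence the two complexes agree.

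Putting the pieces together, CM-ness of $S/I$ gives $H^i_{\mathfrak{m}}(S/I)_{a_F} = 0$ for $i < d$ at every representative squarefree degree, the key lemma transports this vanishing to $H^i_{\mathfrak{m}}(S/\sqrt{I})_{a_F}$, and Hochster's formula then upgrades it to the vanishing of all graded components of $H^i_{\mathfrak{m}}(S/\sqrt{I})$ in cohomological degrees below $d$. I expect the main obstacle to be a rigorous verification of the key lemma: this requires unpacking Takayama's somewhat intricate definition of $\Delta_a(I)$ and carefully exploiting unmixedness to suppress any dependence on non-minimal primary components. A competing strategy would polarize $I$ to a squarefree ideal $I^{\mathrm{pol}}$ in an enlarged polynomial ring, invoke the fact that polarization preserves Cohen--Macaulayness, and then descend along the variable-identification map to $\sqrt{I}$; however, the simplicial map between $\Delta_{I^{\mathrm{pol}}}$ and $\Delta_{\sqrt{I}}$ is not a simple link or restriction, so I expect this route to be more technical than the Takayama comparison above.
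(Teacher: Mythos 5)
The paper does not prove this statement: it is imported verbatim as \cite{HTT05}*{Theorem 2.6}, so there is no internal argument to compare yours against. Judged on its own, your plan is sound and would yield a complete proof along a standard route. Since $\dim S/I=\dim S/\sqrt{I}=:d$, it suffices to transfer the vanishing $H^i_{\mathfrak{m}}(S/I)=0$ for $i<d$ to $S/\sqrt{I}$, and the Takayama/Hochster comparison does exactly that. Your ``key lemma'' is correct and is the heart of the matter: at a degree $a_F$ with entries $-1$ on $F$ and $0$ off $F$ one has $x^{(a_F)_+}=1$, and $1\in I\cdot S[x_j^{-1}:j\in G]$ iff $I$ contains a monomial supported in $G$ iff $\sqrt{I}$ does, so $\Delta_{a_F}(I)=\Delta_{a_F}(\sqrt{I})=\operatorname{link}_{\Delta_{\sqrt{I}}}(F)$, and the cohomological shifts $i-|G_{a_F}|-1$ and $i-|F|-1$ agree.

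Three points to repair in the write-up, none of which breaks the strategy. First, in your statement of Hochster's formula the relevant face is the negative support $F=\{j: a_j<0\}$, not $\{j: a_j=0\}$; as written it is inconsistent with your own choice of test degrees $a_F\in\{0,-1\}^n$. Second, unmixedness is not what makes the key lemma work: the computation above shows that $\Delta_{a_F}(I)$ depends only on which monomials lie in $I$ up to support, hence only on the minimal primes, for an \emph{arbitrary} monomial ideal. Invoking ``Cohen--Macaulay implies unmixed'' is harmless but calling it crucial misidentifies the mechanism. Third, Takayama's formula computes $H^i_{\mathfrak{m}}(S/I)_a$ only for degrees with $a_j\le\rho_j-1$ for $j$ outside the negative support (where $\rho_j$ is the largest exponent of $x_j$ in a minimal generator), asserting vanishing otherwise; if some variable appears in no minimal generator of $I$, the test degrees $a_F$ with $j\notin F$ fall outside this range and must be handled separately (such a variable is a cone vertex of $\Delta_{\sqrt{I}}$, so the corresponding links are cones and pose no obstruction to Reisner's criterion). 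With these details attended to, the argument is complete.
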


\begin{proof}[Proof of Theorem~\ref{thm:radicalVCM}]
Note that for any monomial ideal $J$, $\sqrt{J}$ is generated by squarefree monomials. Therefore by the Stanley--Reisner correspondence, there exists a simplicial complex $\Delta'$ with $I_{\Delta'} = \sqrt{J}$ and $V(I_{\Delta'}) = \varnothing$. Since $I_\Delta\cap I_{\Delta'} = \sqrt{I_\Delta\cap J}$, the rest of the statement follows from Lemma~\ref{lem:intersectionCM} and Theorem~\ref{thm:radicalCM}.
\end{proof}

Note that there exist virtually Cohen--Macaulay complexes for which we cannot find a monomial ideal $J$ such that $V(J) = \varnothing$ and $I_\Delta \cap J$ is Cohen--Macaulay. In these cases it would be insufficient to limit oneself to all the possible ways one can modify the simplicial complex by irrelevant faces. Consider the following counterexample.
\begin{Counterex}\label{counterex}
Let $\Delta =\{ bdef, acef, bcdf, acdf, abdf, bcde, acde, abce \}$ be a simplicial complex in $\bP^2 \times \bP^2$ such that the vertices labeled $a,b,c$ correspond to one component of the product and those labeled $d,e,f$ correspond to the other component. In order to be virtually Cohen--Macaulay, there must exist a virtual resolution $r$ of $I_{\Delta}$ of length $2$. However, the free resolution $r$ of $S/I_{\Delta}$ has length $3$. If there was a $\Delta'$ that differs from $\Delta$ by irrelevant faces, such that the free resolution $r'$ of $S/I_{\Delta'}$ has length 2, $\Delta'$ would be at least 3-dimensional. This is because $2 = \text{length}(r') \geq \codim \left(I_\Delta'\right) = 5 - \dim \left(\Delta'\right)$. But there are no irrelevant faces that are at least three-dimensional, forcing the dimension of $\Delta'$ to be the same as the dimension of $\Delta$, so there is no such $\Delta'$. Yet, $\Delta$ is in fact found to be virtually Cohen--Macaulay by a mapping cone construction using \textit{Macaulay2} \cite{M2}. One short virtual resolution, with graded shifts surpressed, is the following:
\begin{equation*}\label{eqn: vres} 
S^{3}\,
      \xleftarrow{\begin{pmatrix}
      c\,e\,f&0&0&a\,e\,f&a\,b\,e&0&0&a\,b\,c\\
      {-c}&{-d}&b&{-a}&0&0&0&0\\
      0&0&0&0&{-e}&{-f}&d&{-c}\end{pmatrix}}\,S^{8} \xleftarrow{\begin{pmatrix}
      0&0&0&a&0\\
      0&{-b}&0&0&0\\
      a&{-d}&0&0&0\\
      b&0&0&{-c}&0\\
      {-f}&0&c&0&0\\
      e&0&0&0&{-d}\\
      0&0&0&0&{-f}\\
      0&0&{-e}&0&0\end{pmatrix}}\,S^{5}\\
      \xleftarrow\,0.
\end{equation*} 
\end{Counterex}
\section{Balanced Simplicial Complexes}\label{sec:balanced}

In this section, we consider balanced complexes on the product of projective spaces $\bP^{\vec{n}} = \bP^{n_1}\times\cdots \times \bP^{n_r}$, showing that every balanced complex is in fact virtually Cohen--Macaulay. In particular, we prove Theorem~\ref{thm:balanced_implies_vcm}.

By Lemma~\ref{lem:virtualequivalence}, it suffices to show that for every pure and balanced simplicial complex $\Delta$, there exists a simplicial complex $\Delta'$ with $V\left(I_\Delta\right)=V\left(I_{\Delta'}\right)$ such that $\Delta'$ is shellable. Recall that a \textit{shelling} of $\Delta$ is an ordered list $F_1,F_2,\ldots, F_m$ of its facets such that for all $i=2,\ldots,m$,  $\left(\bigcup_{k = 1}^{i-1} F_k\right)\cap F_i$ is pure of codimension 1. If a simplicial complex is pure and has a shelling, then it is \textit{shellable}. The following result relates shellability with the Cohen--Macaulay property.

\begin{theorem}[\cite{miller_sturmfels_2005}, Theorem 13.45]\label{thm:shellableCM}
Let $\Delta$ be a simplicial complex. If $\Delta$ is shellable, then $\Delta$ is Cohen--Macaulay.
\end{theorem}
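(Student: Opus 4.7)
The plan is to deduce Cohen--Macaulayness from Reisner's criterion, which says that a simplicial complex $\Delta$ on vertex set $X$ is Cohen--Macaulay (over $\Bbbk$) if and only if for every face $\sigma \in \Delta$ (including $\sigma = \varnothing$) one has $\widetilde{H}_i(\link_\Delta \sigma;\Bbbk) = 0$ for all $i < \dim \link_\Delta \sigma$. So the task reduces to a topological statement about the homology of links of faces in a shellable complex.

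The first key step is the classical observation that shellability is inherited by links: if $F_1,\ldots,F_m$ is a shelling of $\Delta$ and $\sigma\in\Delta$, then the facets of $\link_\Delta \sigma$ are exactly the sets $F_{i_j}\setminus \sigma$ for those $F_{i_j}$ containing $\sigma$, and the induced order gives a shelling of $\link_\Delta \sigma$. I would verify this by checking that the defining condition on $\left(\bigcup_{k<j} F_{i_k}\setminus\sigma\right)\cap (F_{i_j}\setminus \sigma)$ being pure of codimension $1$ in $F_{i_j}\setminus\sigma$ follows from the corresponding condition upstairs in $\Delta$. This reduction means it suffices to prove that any shellable (pure) complex $\Delta$ of dimension $d$ has $\widetilde{H}_i(\Delta;\Bbbk) = 0$ for all $i<d$; applied to every link, this will yield Reisner's criterion.

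For the topological claim, I would induct on $m$, the number of facets in the shelling. Let $\Delta_i := \langle F_1,\ldots,F_i\rangle$ and assume inductively that $\Delta_{i-1}$ has vanishing reduced homology below dimension $d$. By shellability, the intersection $\Delta_{i-1}\cap \langle F_i\rangle = \left(\bigcup_{k<i} F_k\right)\cap F_i$ is a pure $(d-1)$-dimensional subcomplex $C$ of the boundary $\partial F_i$ of the simplex $F_i$. Using the Mayer--Vietoris sequence for the decomposition $\Delta_i = \Delta_{i-1}\cup \langle F_i\rangle$, together with the facts that $\langle F_i\rangle$ is contractible and that $C$ is either a proper subcomplex of $\partial F_i$ (hence homotopy equivalent to a $(d-1)$-ball, with no homology in any positive degree) or equal to all of $\partial F_i$ (hence a $(d-1)$-sphere, contributing a single class in degree $d-1$ which dies in degree $d$ of $\Delta_i$), I would conclude that passing from $\Delta_{i-1}$ to $\Delta_i$ can only add homology in the top degree $d$. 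This inductively forces $\widetilde{H}_i(\Delta;\Bbbk) = 0$ for all $i<d$.

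Combining the two steps, every link $\link_\Delta \sigma$ is shellable of some dimension $d_\sigma = \dim \link_\Delta \sigma$, hence has vanishing reduced homology below $d_\sigma$, which is exactly Reisner's criterion. The main obstacle is the Mayer--Vietoris step: one has to pin down carefully what the intersection $C \subseteq \partial F_i$ looks like under the shelling hypothesis and track precisely in which degree new homology can appear, so that the inductive hypothesis is preserved. Everything else is either a direct combinatorial verification (the link being shellable) or an invocation of Reisner's theorem as a black box.
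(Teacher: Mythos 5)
The paper does not actually prove this statement: it is quoted verbatim from Miller--Sturmfels (Theorem 13.45) and used as a black box, so there is no internal argument to compare yours against. Your proposal is the standard topological proof and is essentially correct: Reisner's criterion reduces the claim to showing every link has vanishing reduced homology below its dimension, links of a shellable complex inherit the shelling, and a Mayer--Vietoris induction along the shelling shows homology can only appear in top degree. Two points deserve the care you allude to, and both go through. For the link step, if $F_{i_k}, F_{i_j}$ both contain $\sigma$ and the shelling upstairs gives a witness facet $F_p$ with $p < i_j$ and $F_{i_k}\cap F_{i_j} \subseteq F_p \cap F_{i_j} = F_{i_j}\setminus\{v\}$, then $\sigma \subseteq F_{i_k}\cap F_{i_j} \subseteq F_p$ and $v \notin \sigma$ automatically, so $F_p$ survives to the restricted order and $(F_p\cap F_{i_j})\setminus\sigma$ is the required codimension-one face of $F_{i_j}\setminus\sigma$; this is the small observation that makes the ``follows from the condition upstairs'' verification work. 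For the Mayer--Vietoris step, the intersection $C=\Delta_{i-1}\cap\langle F_i\rangle$ is generated by codimension-one faces of $F_i$, and when it is a \emph{proper} such union it is a cone over any omitted vertex (every face of $C$ together with that vertex still lies in some generating facet), hence contractible; with that, the exact sequence gives $\widetilde H_j(\Delta_i)=0$ for $j<d$ exactly as you claim, the only possibly nonzero contribution $\widetilde H_{d-1}(C)$ (when $C=\partial F_i$) feeding into degree $d$. Note also that your argument is uniform in the field $\Bbbk$, consistent with the fact that shellable complexes are Cohen--Macaulay over every field; it is a perfectly acceptable substitute for the textbook citation, whose own proof runs through the algebraic machinery of local cohomology rather than this homotopy-theoretic induction.
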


Let $\Delta$ be a simplicial complex, and let $v \notin \Delta$ be a vertex. Then the \textit{cone} of $\Delta$ on $v$ is the simplicial complex
\[\Delta * v := \{F\cup\{v\} \mid F\in \Delta\text{ is a face}\}\cup\{\varnothing\}.\] It is clear that $\Delta$ is shellable if and only if $\Delta * v$ is shellable. 

\begin{definition}
\label{def:irrcomplex}
In the product of projective spaces $\bP^{\vec{n}}= \bP^{n_1}\times\cdots \times \bP^{n_r}$ with $r$ components, the \textit{irrelevant complex} supported on $X_{\vec{n}}$ is a pure $(r-1)$-dimensional complex $\Delta_{\text{irr}}(\vec n)$ defined as follows: for each possible $(r-1)$-dimensional face $\sigma\subseteq X_{\vec n}$,
\[\sigma \in \Delta_{\irr}(\vec{n}) \Leftrightarrow \text{there is exactly one pair of vertices $\{v, w\} \subseteq \sigma$ in the same component of $\bP^{\vec n}$}.\]
\end{definition}

The following result provides us with a shelling order of a certain complex that will help us prove our main result.

\begin{prop}\label{prop:irrshelling}
Suppose $\vec{n}$ is a length $r$ vector and has no zero entry. If $R$ is a balanced facet on $X_{\vec n}$, then $\Delta = \Delta_{\irr}(\vec{n}) \cup \{R\}$ is shellable.
\end{prop}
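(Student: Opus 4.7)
The plan is to construct an explicit shelling of $\Delta = \Delta_{\irr}(\vec{n}) \cup \{R\}$. I place $R$ as the first facet and then order the facets of $\Delta_{\irr}(\vec{n})$ by decreasing $\ell(F) := |F \cap R|$, breaking ties via a carefully chosen lexicographic order. For each $F \in \Delta_{\irr}(\vec{n})$, write $c(F)$ for its omitted component and $d(F)$ for its doubled component, and order lex on the tuple $(c(F), d(F), \text{sorted indices of the remaining vertices of } F)$.

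The core combinatorial step is a vertex-swap construction showing that, for each $F \in \Delta_{\irr}(\vec{n})$ and each well-chosen vertex $x \in F$, there is a strictly earlier facet $F^{\ast}_{x}$ of $\Delta$ with $F \cap F^{\ast}_{x} = F \setminus \{x\}$. If $x$ is a non-$R$ vertex in some component $V_i$ and $v_i \notin F$ (where $v_i$ denotes the vertex of $R$ in $V_i$), set $F^{\ast}_{x} := (F \setminus \{x\}) \cup \{v_i\}$; this is again a facet of $\Delta_{\irr}$ of the same type as $F$, but with $\ell(F^{\ast}_{x}) = \ell(F) + 1$, hence strictly earlier. The edge cases (namely $v_i \in F$, which forces $V_i$ to be the doubled component, and the case $x = v_j$ being an $R$-vertex of $F$) are handled by alternative swaps: $x \mapsto v_{c(F)}$ when $\ell(F) = r-1$ (yielding $R$), $x \mapsto v_j$ for a suitable $j$ when $\ell(F) \leq r - 2$, or a same-$\ell$ swap whose tiebreaker is arranged to produce a predecessor. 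The hypothesis that $\vec{n}$ has no zero entry is essential here: each component then has at least two vertices, providing enough flexibility for every swap.

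The shelling condition at $F$ then reduces to the statement that the restriction set $\mathcal{R}(F) := \{x \in F : F \setminus \{x\} \text{ lies in some earlier facet}\}$ is not contained in any earlier facet. The swap constructions above identify the vertices of $F$ that lie in $\mathcal{R}(F)$, and a case analysis using the lex tiebreaker verifies that no earlier facet $F'$ can simultaneously contain all of $\mathcal{R}(F)$. The main obstacle I anticipate is organizing the secondary lex order coherently across all $\ell$-levels, with the subtlest subcase being the $R$-vertex $v_{d(F)}$ in the doubled component: here the natural swap may not yield a valid facet of $\Delta_{\irr}$, forcing $v_{d(F)} \notin \mathcal{R}(F)$, and one must verify this is still consistent with the shelling condition (i.e., that every prior intersection $F \cap F'$ of codimension $\geq 2$ is covered by some codimension-$1$ face $F \setminus \{x\}$ with $x \in \mathcal{R}(F)$).
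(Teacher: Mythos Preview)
Your approach is genuinely different from the paper's and is viable. The paper also places $R$ first, but then groups the irrelevant facets by their \emph{omitted component}: first all facets of $\Delta_{\neg r}$ (those missing component $r$), then $\Delta_{\neg(r-1)}$, and so on down to $\Delta_{\neg 1}$, with an intricate lexicographic refinement inside each $\Delta_{\neg k}$. Your primary invariant $\ell(F)=|F\cap R|$ is more conceptual: every non-$R$ vertex $x\in F$ admits a swap to an $R$-vertex that strictly increases $\ell$, so the bulk of the shelling verification is immediate. In particular, your observation that $N(F):=F\setminus R\subseteq\mathcal{R}(F)$ already handles every earlier facet $F'$ with $\ell(F')>\ell(F)$ (since then $|N(F')|<|N(F)|$ forces $N(F)\not\subseteq F'$), and also every same-$\ell$ facet with $N(F')\neq N(F)$ (since $N(F)\subseteq F'$ would force $N(F)=N(F')$). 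So the only genuine work is within a fixed ``$N$-class'' at a fixed $\ell$-level.

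What remains is exactly the part you flag as the main obstacle, and it is real but tractable. Within an $N$-class the facets are parametrized by the pair $(c(F),d(F))$ (or by $c(F)$ alone when $N$ already contains two vertices from one component), and your lex tiebreaker on $(c,d)$ does work: if $c'<c$ then $v_{c'}\in\mathcal{R}(F)\setminus F'$ via the swap $v_{c'}\mapsto v_c$; if $c'=c$ and $d'<d$ then $v_{d}\in\mathcal{R}(F)\setminus F'$ because $F\setminus\{v_d\}\subseteq F'$. Your worry that $v_{d(F)}$ might fail to lie in $\mathcal{R}(F)$ is justified precisely when $d(F)$ is the smallest index among components meeting $N$, but in that case there is no earlier same-$N$-class facet with $c'=c$ and $d'<d$, so the witness $v_{d(F)}$ is never needed. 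You should write this case analysis out explicitly; it is shorter than the paper's three-page verification, which is the payoff of your choice of primary invariant. The paper's ordering, by contrast, does not isolate the same-$N$-class issue and instead performs a direct but lengthy case analysis on how two facets $F,G\in\Delta_{\neg k}$ (or $F\in\Delta_{\neg k}$, $G\in\Delta_{\neg m}$ with $m>k$) can intersect.
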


\begin{proof}

We will explicitly construct the shelling order on $\Delta_{\irr}(\vec{n}) \cup \{R\}$. The set of components are indexed $1,\ldots, r$, and the set of vertices $x_{i,j}$ in each component $i$ are indexed $j=0,1,\ldots,n_i$. Wihout loss of generality, let \[R = \{x_{i, 0} \mid 1 \leq i \leq r\}.\] 
For $k=1,\ldots,r$, let
\[\Delta_{\neg k} \coloneqq \{\sigma \in \Delta_{\irr}(\vec{n}) \mid x_{i,j} \in \sigma \Rightarrow i \neq k\}.\] 
In words, $R$ consists of the zeroth vertex in each component, and $\Delta_{\neg k}$ consists of faces of the irrelevant complex that do not have a vertex in component $k$. Our shelling order will first add $R$, then all facets of $\Delta_{\neg r}$, then all facets of $\Delta_{\neg (r - 1)}$, then all facets of $\Delta_{\neg (r-2)}$, and so on, until we finally add all facets of $\Delta_{\neg 1}
$. Below we describe a total order on the facets of $\Delta_{\neg k}$ that specifies how we add facets of $\Delta_{\neg k}$ in our shelling order.

Notice that a facet of $\Delta_{\neg k}$ can be specified by a pair of same-component vertices of component $i \neq k$, along with an $(r-2)$-tuple, which describes the vertices in each of the remaining $r-2$ components (all but the $i^{\text{th}}$ and $k^{\text{th}}$ components). For instance, in $\Delta_{\neg 4} \subset \Delta_{\text{irr}}((5,4,2,2))$,  the pair $(x_{2,1}, x_{2,3})$ and the $2$-tuple $(3,2)$ would specify the facet $\{x_{1,3}, x_{2,1}, x_{2,3}, x_{3,2}\}$. Given $\Delta_{\neg k}$, same-component vertices $\alpha \subset X_{\vec{n}}$, and $\vec{v} \in \Z_{\geq 0}^{r-2}$, we define $(\alpha, \vec{v})$ to be the facet of $\Delta_{\neg k}$ given by the pair $\alpha$ and the tuple $\vec{v}_F$.

Given all ordered pairs of vertices $(x_{i,j}, x_{i,\ell})$ where $i \neq k$ and  $j < \ell$, we say that $(x_{i,j}, x_{i,\ell}) < (x_{i',j'}, x_{i',\ell'})$ if either:
\begin{itemize}
    \item $i < i'$, or
    \item $i=i'$ and $(j, \ell) < (j', \ell')$ lexicographically.
\end{itemize}

With the ordering on same-component vertices above and the lexicographic ordering on the $(r - 2)$-tuples, we have a total ordering of the set of facets of $\Delta_{\neg k}$ as follows. Suppose $F_1$ and $F_2$ are facets of $\Delta_{\neg k}$, with $F_1$ corresponding to the same-component vertices $\alpha_1$ along with the $(r-2)$-tuple $\vec{v}_1$ specifying the rest of the vertices, and $F_2$ corresponding to the same-component vertices $\alpha_2$ along with the $(r-2)$-tuple $\vec{v}_2$. Then $F_1<F_2$ if and only if either:
\begin{itemize}
    \item $\vec{v}_1<\vec{v}_2$ lexicographically, or
    \item $\vec{v}_1=\vec{v}_2$ and $\alpha_1<\alpha_2$ under the total order described above.
\end{itemize}
For our shelling order, we add the facets of $\Delta_{\neg k}$ from the least to the greatest, using this order. This ordering can be extended to comparing two same-dimensional faces of $\Delta_{\neg k}$ that both contain a pair of same-component vertices and whose vertices use the same set of components. This is what we mean below when we compare two non-facets, which we represent with Greek letters such as $\Sigma, \Omega, \Gamma$.

Let $F = (\alpha, \vec{v}_F) \in \Delta_{\neg k}$. We want to show that if $F_1,\ldots,F_\ell$ are all of the facets in the shelling order before the facet $F$, then $\left(\bigcup_{i=1}^\ell F_i\right)\cap F$ is pure of codimension $1$. We consider the intersection of $F$ with the following three kinds of previous facets:

\indent\indent\textbf{Case 1:} the balanced facet $R$;\\
\indent\indent\textbf{Case 2:} a facet $G\in\Delta_{\neg m}$ such that $k<m\leq r$ (if one exists);\\
\indent\indent \textbf{Case 3:} a facet $G\in\Delta_{\neg k}$ such that $G<F$ (if one exists).

In all three cases, we show that the intersection, if nonempty, is either of codimension $1$, or contained in another intersection with another facet smaller than $F$ such that this latter intersection is of codimension $1$. It is clear that for all the facets besides the first one, their intersections with previous facets are nonempty. Thus, we ensure that $\left(\bigcup_{i=1}^\ell F_i\right)\cap F$ is pure of codimension $1$.

Some notation before we get started: given a pair of same-component vertices $\alpha \subset X_{\vec{n}}$, we denote by $\min(\alpha) \in X_{\vec{n}}$ the vertex in the pair that is indexed by a smaller number within the component. If $F = (\alpha, \vec{v}_F) \in \Delta_{\irr}(\vec{n})$, $\text{comp}(F)$ is the component of $\alpha$. We use $\delta_{x,y}$ to denote the Kronecker delta, which is 1 if $x = y$ and 0 otherwise. If $\vec{v}$ has $n$ coordinates, then for $i=1,\ldots,n$, let $\vec{v}[i]$ denote the $i^{\text{th}}$ coordinate of $\vec{v}$. Given a set of vertices $\Omega$, when $x_{i, j} \in \Omega$ is the unique vertex of $\Omega$ in component $i$, let $\Omega(i) := x_{i, j}$ and $\ind(\Omega(i)) = j$. Recall that there are three cases to consider.

\vspace{1em}
\noindent\textbf{Case 1:} First, we consider the intersection of $F$ with the balanced facet $R$. If $\vec{v}_F = (0, 0, \dots, 0)$, then either there exists $x_{i,0} \in \alpha$ and $F \cap R$ is of codimension $1$, or 
\[F \cap R \subseteq F \cap (\{x_{i,0}, \min(\alpha)\}, (0, 0, \dots, 0)),\]
where $i=\text{comp}(F)$. Notice that the latter intersection is of codimension $1$. If $\vec{v}_F \neq (0,0,\dots,0)$, then $F \cap R \subseteq R\setminus \{x_{k, 0}\}$, which is equal to the facet defined by $(\alpha, (0, 0, \dots, 0))$ intersected with $R$.
\vspace{1em}\\
\noindent\textbf{Case 2:} If $k \neq r$, then we have added facets of $\Delta_{\neg r}, \Delta_{\neg (r-1)},\ldots,\Delta_{\neg (k+1)}$ before we have added the facet $F$. Thus, we need to consider the intersection $F \cap \Delta_{\neg m}$, where $k< m\leq r$, and show that this intersection has codimension $1$. Notice that this intersection has exactly vertices in $F$ that are not in component $m$. This is pure of codimension $1$, except when $m = \text{comp}(F)$. 

Suppose that $m = \text{comp}(F)$. If $\alpha$ is not the smallest pair of same-component vertices with respect to the total ordering defined previously, then simply choose another pair of same-component vertices $\beta < \alpha$ such that $\abs{\alpha\cap \beta} = 1$. Let $G = (\beta,\vec{v}_F)$ be the facet obtained by replacing $\alpha$, the same-component vertices of $F$, with $\beta$. Then $G < F$, and $F \cap \Delta_{\neg \text{comp}(F)} \subseteq F\cap G$, which is of codimension $1$.

Now suppose that $\alpha$ is the smallest pair of same-component vertices. Then, we have that $\alpha = \{x_{\text{comp}(F), 0}, x_{\text{comp}(F), 1}\}$. If $\vec{v}_F$ has all zero entries, then $F \cap \Delta_{\neg \text{comp}(F)} \subseteq F \cap R$, which is of codimension 1. Now if $\vec{v}_F$ has some nonzero entry, then we have two cases: either $\text{comp}(F) \neq 1 + \delta_{k,1}$ or $\text{comp}(F) = 1+\delta_{k, 1}$. We will show that, in either case, we have that there exists a facet $G$ of $\Delta_{\neg k}$ such that $G<F$ and $F\cap \Delta_{\neg \text{comp}(F)}\subseteq F\cap G$, which is of codimension $1$.

Suppose we are in the first case, and $\text{comp}(F) \neq 1+\delta_{k,1}$. Then let $G = (F\setminus\{x_{\text{comp}(F),1}\})\cup\{ x_{q, \ell}\}$, where 
\[\ell=\begin{cases}
1&\quad\text{if $x_{q,0}\in F$,}\\
0&\quad\text{otherwise,}\end{cases}\qquad\qquad q=\begin{cases}
\text{comp}(F)-2&\quad\text{if $k=\text{comp}(F)-1$,}\\
\text{comp}(F)-1&\quad\text{otherwise.}
\end{cases}\]
Then $G$ is a facet defined by same-component vertices $\beta$ and vector $\vec{v}_G$ such that $\text{comp}(G) = q$. Notice that $G < F$ because $\vec{v}_G \leq \vec{v}_F$ and $\beta < \alpha$ by construction. 

Now suppose we are in the second case, and $\text{comp}(F) = 1+\delta_{k,1}$. Let $G = (F \setminus \{x_{\text{comp}(F), 1}\}) \cup\{ x_{r, \ell}\}$, where $\ell = 1$ if $x_{r, 0} \in F$, and $\ell = 0$ otherwise. Again, let $G$ be defined by the same-component vertices $\beta$ and the vector $\vec{v}_G$. Then $G < F$ because $\vec{v}_G < \vec{v}_F$. To see this, notice that $\vec{v}_G$ is the result of appending $0$ to the front of $\vec{v}_F$ and removing the end of $\vec{v}_F$. This is always less than or equal to $\vec{v}_F$, with equality obtained only if $\vec{v}_F$ is all zero, which is not the case. We may assume $\vec{v}_F[1] = 0$; otherwise $\vec{v}_G < \vec{v}_F$ automatically. Let $w$ be the smallest index such that $\vec{v}_F[w + 1] > \vec{v}_F[w]$, which exists because $\vec{v}_F$ has nonzero entries. Then, $\vec{v}_F[i] = 0$ for all $i \leq w$. Note that $\vec{v}_G[i+1] = \vec{v}_F[i] = \vec{v}_F[i+1]$ for $i < w$, and $\vec{v}_G[w + 1] = \vec{v}_F[w] < \vec{v}_F[w+1]$. Thus, we have that $\vec{v}_G<\vec{v}_F$ lexicographically, so we have that $G<F$. 

Note that in both cases, we have found a facet $G$ with $G < F$ and $F \cap \Delta_{\neg m} \subseteq F \cap G$, which is of codimension $1$. 

\vspace{1em}
\noindent\textbf{Case 3:} We now show that for $G \in \Delta_{\neg k}$ such that $G < F$, we have that $F \cap G$ is either empty, of codimension $1$, or contained in an intersection $F \cap G'$ which is of codimension $1$ and such that $G'\in\Delta_{\neg k}$ is a facet with $G' < F$. 

Suppose that $F \cap G$ is nonempty. Let $F$ and $G$ be the facets defined by $(\alpha,\vec{v}_F)$ and $(\beta,\vec{v}_G)$ respectively, where $\alpha$ and $\beta$ are pairs of same-component vertices, and $\vec{v}_F$ and $\vec{v}_G$ are $(r-2)$-tuples. We proceed by casework. The two main cases that we consider are if $\text{comp}(F)=\text{comp}(G)$ and if $\text{comp}(F)\neq\text{comp}(G)$. 

First suppose that $\text{comp}(F) = \text{comp}(G)$. Then one can find an appropriate facet $G'$, depending on whether $\vec{v}_G < \vec{v}_F$ or $\vec{v}_G=\vec{v}_F$ under the lexicographic ordering (note that $G<F$ by assumption, so these are the only two cases to consider).
If $\vec{v}_G < \vec{v}_F$, choose $\vec{w} < \vec{v}_F$ to be an appropriate $(r-2)$-tuple such that $\vec{w}$ and $\vec{v}_F$ are the same in $r-1$ coordinates, and the coordinates in which $\vec{v}_F$ and $\vec{v}_G$ are the same as the subset of these $r-3$ coordinates.

Then, let $G'$ be the facet defined by $(\alpha, \vec{w})$; by the way that it is defined, $G'<F$ and $G'\cap F$ is of codimension $1$. Now suppose $\vec{v}_G = \vec{v}_F$; then, $\beta<\alpha$. This implies that either $\alpha\cap\beta\neq\varnothing$ (when considering $\alpha$ and $\beta$ as sets) and $F\cap G$ is of codimension $1$, or $\alpha\cap\beta=\varnothing$. In this case, there exists a pair of same-component vertices $\gamma$ such that $\gamma<\alpha$ and $\abs{\gamma\cap\alpha}=1$, since $\alpha$ is not the smallest in the total order. Let $G'$ be the facet defined by $(\gamma,\vec{v}_F)$. Again, we have by definition that $G'<F$ and $G'\cap F$ is of codimension $1$.

Next we consider the case that $\text{comp}(F)\neq\text{comp}(G)$. Let $\Sigma_F$ and $\Sigma_G$ be the sets of vertices in $F$ and $G$ respectively that are in components that have indices strictly less than $\min(\text{comp}(F), \text{comp}(G))$. Let $\Omega_F$ and $\Omega_G$ be the sets of vertices in $F$ and $G$ that are in components that have indices between and including $\text{comp}(F)$ and $\text{comp}(G)$. Finally, let $\Gamma_F$ and $\Gamma_G$ be $F\setminus(\Sigma_F\cup\Omega_F)$ and $G\setminus(\Sigma_G \cup\Omega_G)$. To clarify, $F=\Sigma_F\cup\Omega_F\cup\Gamma_F$, where $\Sigma_F$ consists of the vertices with the smallest indices and $\Gamma_F$ consists of the vertices with the largest indices.

By abuse of notation, we also consider the sets of vertices $\Sigma_i,\Omega_i,$ and $\Gamma_i$ as tuples, ordered by the indices of the vertices. By our ordering of facets, we have that $\Sigma_G\leq \Sigma_F$ lexicographically since $G<F$. Recall that we are currently in the case that $\text{comp}(F)\neq\text{comp}(G)$. Within this case, we consider two subcases: $\Sigma_G<\Sigma_F$ and $\Sigma_G=\Sigma_F$. If $\Sigma_G<\Sigma_F$, we will show that there exists a facet $G'$ such that $G'<F$ and $G'\cap F$ is of codimension $1$. If $\Sigma_G=\Sigma_F$, then we must consider further subcases, depending on the difference between $\text{comp}(F)$ and $\text{comp}(G)$.

We first consider the subcase that $\Sigma_G<\Sigma_F$. In this case, choose the smallest integer $i$ such that a vertex in $\Sigma_G$ and in component $i$ has a smaller index than a vertex in $\Sigma_F$ and in component $i$; let $\Sigma_G(i)$ and $\Sigma_F(i)$ refer to these vertices. Then, let $G'$ be the facet $G'= (F\setminus \Sigma_F(i))\cup\Sigma_G(i)$. By definition, we have that $G' < F$ and $G \cap F \subseteq G' \cap F$, which is pure of codimension $1$.

Next, we consider the second subcase where $\Sigma_F=\Sigma_G$. We now must restrict our attention to $\Omega_F$ and $\Omega_G$. $\Omega_F$  consists of a pair of same-component vertices in component $\text{comp}(F)$ and other vertices in distinct components. So we may write $\Omega_F$ as $(\alpha,\vec{u}_F)$, where $\vec{u}_F$ specifies the indices of the vertices of $\Omega_F$ in the components other than $\text{comp}(F)$. Similarly, we may write $\Omega_G$ as $(\beta,\vec{u}_G)$. 

We want to show that there exists a particular set of vertices, which we call $\Omega_{G'}$, such that $\Omega_{G'}<\Omega_F$ and $\Omega_F \cap \Omega_G\subseteq\Omega_F\cap\Omega_G'$, with this second intersection being of codimension $1$ in $\Omega_F$. We will then form a facet $G'\in \Delta_{\neg k}$ later based on $\Omega_{G'}$ to satisfy the necessary conditions; namely, that $G'< F$ and $F\cap G\subseteq F\cap G'$, with this latter intersection being of codimension $1$.
We consider the following subcases: 
\begin{itemize}
    \item $\text{comp}(G)-\text{comp}(F)=1$,
    \item $\text{comp}(G)-\text{comp}(F)>1$,
    \item $\text{comp}(F)-\text{comp}(G)=1$, or
    \item $\text{comp}(F)-\text{comp}(G)>1$.
\end{itemize}

First consider the case where $\text{comp}(G)-\text{comp}(F)=1$. We then have the following possibilities:
\begin{enumerate}
\item $\Omega_G(\text{comp}(F)) \in \alpha$, in which case $\Omega_F \cap \Omega_G$ is already codimension $1$ in $\Omega_F$;
\item $\Omega_G(\text{comp}(F)) \notin \alpha$ and $\min(\alpha) < \Omega_F(\text{comp}(G))$ (comparing only the second subscripts of the two vertices, i.e., their indices in their respective components), in which case we can let $\Omega_{G'} = (\Omega_G\setminus\Omega_G(\text{comp}(F)))\cup\min(\alpha)$ and maintain $\Omega_{G'}<\Omega_F$;
\item $\Omega_G(\text{comp}(F)) \notin \alpha$ and $\alpha$ is not the smallest pair of same-component vertices, in which case we can let $\Omega_{G'} = \gamma \cup\Omega_F(\text{comp}(G))$, where $\gamma$ is a pair of vertices in component $\text{comp}(F)$ such that $\gamma<\alpha$ and $\abs{\gamma\cap\alpha} = 1$; note that $\min(\alpha) > \Omega_F(\text{comp}(G))$ falls in this case.
\end{enumerate}
These possibilities are exhaustive. Indeed, if $\Omega_G(\text{comp}(F)) \notin \alpha$, $\alpha$ is the smallest pair of same-component vertices, and $\min(\alpha) = \Omega_F(\text{comp}(G))$, then $\Omega_F(\text{comp}(G)) = x_{\text{comp}(G), 0}$. Note that $\vec{v}_G < \vec{v}_F$, so we have $\Omega_G(\text{comp}(F)) \leq \Omega_F(\text{comp}(G)) = x_{\text{comp}(G), 0}$, so $\Omega_G(\text{comp}(F)) = x_{\text{comp}(G), 0}$. But then $\Omega_G(\text{comp}(F)) \in \alpha$, which is a contradiction. So this case cannot happen.

We now consider the case where $\text{comp}(G)-\text{comp}(F)>1$. Recall that we are assuming that $\Sigma_F=\Sigma_G$, so by the lexicographic ordering, we must have that $\ind(\Omega_G(\text{comp}(F))) \leq \ind(\Omega_F(\text{comp}(F)+i))$, where $i = 1 + \delta_{\text{comp}(F) + 1, k}$ (recall that we are in working in $\Delta_{\neg k}$). 
If $\ind(\Omega_G(\text{comp}(F))) < \ind(\Omega_F(\text{comp}(F) + i))$, then let 
\[\Omega_{G'}= (\Omega_F\setminus(\alpha\cup\Omega_F(\text{comp}(G)))\cup(\Omega_G(\text{comp}(F))\cup\beta).\] 
This is essentially $\Omega_G$  with vertices of the intermediate components replaced with the corresponding vertices in $\Omega_F$. Since $\Omega_G(\text{comp}(F)) < \Omega_F(\text{comp}(F)+i)$, we have that $\vec{u}_{G'} < \vec{u}_F$. As desired, $\Omega_G \cap \Omega_F \subseteq \Omega_{G'} \cap \Omega_F$, which is of codimension $1$.

If, on the other hand, we have that $\ind(\Omega_G(\text{comp}(F))) = \ind(\Omega_F(\text{comp}(F)+i))$, we consider the following three cases that depend on the relationship between $\ind(\Omega_F(\text{comp}(F)+i))$ and $\ind(\Omega_G(\text{comp}(F)+i))$. 
\begin{enumerate}
\item If $\ind(\Omega_F(\text{comp}(F)+i)) < \ind(\Omega_G(\text{comp}(F) + i))$, then let 
\[\Omega_{G'} = \Omega_F\setminus(\alpha \cup \Omega_F(\text{comp}(G)))\cup (\Omega_G(\text{comp}(F))\cup \beta).\] 
Then $\vec{u}_{G'} < \vec{u}_G$, because we replaced $\Omega_G(\text{comp}(F) + i)$ with the smaller $\Omega_F(\text{comp}(F)+i)$. Hence $\Omega_{G'} < \Omega_G < \Omega_F$ and $\Omega_G \cap \Omega_F \subseteq \Omega_{G'} \cap \Omega_F$, which has codimension $1$.
\item If $\ind(\Omega_F(\text{comp}(F)+i)) = \ind(\Omega_G(\text{comp}(F) + i))$, then $\codim(\Omega_F \cap \Omega_G) = \codim(\Omega_{F'} \cap \Omega_{G'})$ where $\Omega_{F'}$ is the face $\Omega_{F'} = \Omega_F\setminus \Omega_F(\text{comp}(F) + i)$ and $\Omega_{G'}$ is the face $\Omega_{G'} = \Omega_G\setminus \Omega_G(\text{comp}(F) + i)$. We are allowed to pass to a case where $\Omega_F$ and $\Omega_G$ have fewer vertices because we have treated the case where $\text{comp}(F) + 1 = \text{comp}(G)$.
\item If $\ind(\Omega_F(\text{comp}(F)+i)) > \ind(\Omega_G(\text{comp}(F) + i))$, then $\Omega_F(\text{comp}(F) + i)$ is not in $\Omega_F\cap\Omega_G$ and we can let 
\[\Omega_{G'} = (\Omega_G \setminus \Omega_F(\text{comp}(F) + i)) \cup \Omega_G(\text{comp}(F) + i).\] We then see that $\Omega_{G'}$ satisfies the necessary conditions; in particular, $\Omega_{G'}<\Omega_F$.
\end{enumerate}

Thus far, we have shown in the cases where $\text{comp}(G)-\text{comp}(F)=1$ and $\text{comp}(G)-\text{comp}(F)>1$ that we can find $\Omega_{G'}$ satisfying the necessary conditions. 

We now consider the two cases where $\text{comp}(F) > \text{comp}(G)$. Since $\vec{u}_G \leq \vec{u}_F$, we are free to alter $\beta$ such that $\Omega_F(\text{comp}(G)) \in \beta$ as before. 

First, we consider the case where $\text{comp}(F)-\text{comp}(G)=1$. We have three cases: 

\begin{enumerate}
\item $\Omega_G(\text{comp}(F)) \in \alpha$, in which case $\Omega_F \cap \Omega_G$ is already of codimension $1$ in $\Omega_F$;
\item $\Omega_G(\text{comp}(F)) \notin \alpha$ and $\min(\alpha) \leq \Omega_F(\text{comp}(G))$, in which case let $\Omega_{G'} = \beta \cup \min(\alpha) $. We then have that $\Omega_{G'} < \Omega_F$ because $\min(\alpha) \leq \Omega_F(\text{comp}(G))$ and $\text{comp}(G) < \text{comp}(F)$;
\item $\Omega_G(\text{comp}(F)) \notin \alpha$ and $\min(\alpha) > \Omega_F(\text{comp}(G))$, in which case $\alpha$ is not the smallest pair of same-component vertices, and we can let $\Omega_{G'} = \gamma \cup \Omega_F(\text{comp}(G))$, where $\gamma$ is a pair of vertices in component $\text{comp}(F)$ such that $\gamma < \alpha$ and $\abs{\gamma\cap\alpha} = 1$.
\end{enumerate}

Now consider the case where $\text{comp}(F)-\text{comp}(G)>1$. Again, we consider three cases depending on the relationship between $\Omega_F(\text{comp}(G))$, $\Omega_F(\text{comp}(G)+i)$, and $\Omega_G(\text{comp}(G)+i)$, where $i = 1 + \delta_{\text{comp}(G)+1, k}$.
\begin{enumerate}
\item If $\ind(\Omega_F(\text{comp}(G))) > \ind(\Omega_F(\text{comp}(G)+i))$, then let 
\[\Omega_{G'} = (\Omega_F\setminus( \Omega_F(\text{comp}(G)) \cup\alpha)) \cup (\beta \cup \Omega_G(\text{comp}(F))).\] 
Then, the first vertex after $\beta$ in $\Omega_{G'}$ is $\Omega_F(\text{comp}(G)+i)$, and so $\Omega_{G'} < \Omega_F$.
\item If $\ind(\Omega_F(\text{comp}(G))) = \ind(\Omega_F(\text{comp}(G)+i)) = \ind(\Omega_G(\text{comp}(G)+i))$, then we have $\codim(\Omega_F \cap \Omega_F) = \codim(\Omega_{F'} \cap \Omega_{G'})$, where $\Omega_{F'}$ and $\Omega_{G'}$ are the faces $\Omega_{F'} = \Omega_F \setminus \Omega_F(\text{comp}(G)+i)$ and $\Omega_{G'} = \Omega_G \setminus\Omega_G(\text{comp}(G)+i)$. We are allowed to pass to a case where $\Omega_F$ and $\Omega_G$ have fewer vertices because we have already considered the case where $\text{comp}(G) + 1 = \text{comp}(F)$.
\item If either $\ind(\Omega_F(\text{comp}(G)) < \ind(\Omega_F(\text{comp}(G)+i))$ or \[\ind(\Omega_F(\text{comp}(G)))=\ind(\Omega_f(\text{comp}(G)+i))>\ind(\Omega_G(\text{comp}(G)+i)),\]  then let $\Omega_{G'} = (\Omega_F\setminus\Omega_F(\text{comp}(G)+1))\cup\Omega_G(\text{comp}(G)+i) $. 
In either situation, $\Omega_F(\text{comp}(G)+i) \notin \Omega_F\cap\Omega_G$: in the first case, \[\ind(\Omega_G(\text{comp}(G)+i)) \leq \ind(\Omega_F(\text{comp}(G))) < \ind(\Omega_F(\text{comp}(G)+i));\] while in the second case, we are given that $\ind(\Omega_G(\text{comp}(G)+i))<\ind(\Omega_F(\text{comp}(G)+i))$. In both cases, we have that $\Omega_{G'}<\Omega_F$ because we replaced $\Omega_F(\text{comp}(G)+i)$ with the smaller $\Omega_G(\text{comp}(G)+i)$.
\end{enumerate}

In all cases of comparing $\text{comp}(F)$ and $\text{comp}(G)$, we have that there exists $\Omega_{G'}$ such that $\Omega_{G'}<\Omega_F$ and $\Omega_F\cap\Omega_G\subseteq\Omega_F\cap\Omega_{G'}$, which is of codimension $1$ in $\Omega_F$. Now define the facet $G'\in\Delta_{\neg k}$ to be $G'=\Sigma_F\cup\Omega_{G'}\cup\Gamma_F$. Then $G'<F$, and $F\cap G\subseteq F\cap G'$, which is of codimension $1$.

We have therefore shown that, if $F_1,\ldots,F_\ell$ are the facets in the ordering before $F$, then $\left(\bigcup_{i=1}^\ell F_i\right)\cap F$ is pure of codimension $1$. In conclusion, our ordering is indeed a shelling order.
\end{proof}

We are now ready to show Theorem~\ref{thm:balanced_implies_vcm}, the main result of this section.

\begin{proof}[Proof of Theorem~\ref{thm:balanced_implies_vcm}]
If $\vec{n}$ has all zero entries, then $\Delta$ has exactly one balanced facet and is trivially shellable. For $\vec{n} \neq \vec{0}$, we claim that there exists some set of irrelevant facets $\Delta'$ such that $\Delta' \cup \Delta$ is shellable, and therefore $\Delta$ is virtually Cohen-Macaulay. We prove this by induction on the number of zero entries in $\vec{n}$.

If $\vec{n}$ has no zero entries, then $\Delta_{\irr}(\vec{n}) \cup R$ has a shelling order given by Proposition~\ref{prop:irrshelling}, where $R$ is some balanced facet in $\Delta$. Then add the remaining balanced facets of $\Delta$ in any order. The ordering is still a shelling because given any balanced facet, all of its ridges are contained in $\Delta_{\irr}(\vec{n})$.

Note that if $\vec{m}$ is the result of permuting entries of $\vec{n}$, then any shelling order of a complex on the vertex set $X_{\vec m}$ gives rise to a shelling order of the relabelled complex on $X_{\vec n}$. Therefore, if $\vec{n}$ has some zero entries but also some nonzero entries, we may assume that all zero entries in $\vec{n}$ are trailing zeros. Suppose that $\vec{m}$ is $\vec{n}$ with a zero appended. By the induction hypothesis, we have a shelling order of $\Delta' \cup \Delta|_{X_{\vec n}}$, where $\Delta'$ is some complex of irrelevant facets. Note that any balanced facet on $X_{\vec m}$ must contain the vertex $x_{r, 0}$ of the last component, since this is the only vertex in this component. Therefore $\Delta = \Delta|_{X_{\vec n}} * x_{r,0}$ and the previous shelling order gives rise to a shelling order of $(\Delta' \cup \Delta|_{X_{\vec n}})*x_{r,0} = \Delta' * x_{r,0} \cup \Delta$, where $\Delta' * x_{r,0}$ consists of only irrelevant facets.
\end{proof}

\section{acknowledgements}
This research was conducted at the 2019 University of Minnesota--Twin Cities REU, supported by NSF RTG grant DMS-1745638. We thank Christine Berkesch and Jorin Schug for their advice and support. We also thank Gregory Michel for editing this paper, and we thank Vic Reiner for his help with \secref{sec:balanced}.

\begin{bibdiv}
\begin{biblist}
\bib{BES}{article}{
	author = {Berkesch, Christine},
	author = {Erman, Daniel},
	author = {Smith, Gregory G.},
	title = {Virtual Resolutions for a Product of Projective Spaces},
	journal = {Alg. Geom.},
	volume = {7},
	number = {4},
	pages = {460--481},
	date ={2020},
	month ={7},
}

\bib{HTT05}{article}{
	author = {Herzog, Jürgen},
	author = {Takayama, Yukihide},
	author = {Terai, Naoki},
	year = {2005},
	title = {On the radical of a monomial ideal},
	journal = {Archiv der Mathematik},
	month = {11}
	pages = {397--408},
	doi = {10.1007/s00013-005-1385-z},
}

\bib{M2}{misc}{
	author = {Grayson, Daniel R.},
	author = {Stillman, Michael E.},
	title = {Macaulay2, a software system for research in algebraic geometry},
	label = {M2},
	howpublished = {Available at \url{http://www.math.uiuc.edu/Macaulay2/}},
}

\bib{miller_sturmfels_2005}{book}{
	author={Miller, Ezra},
	author = {Sturmfels, Bernd},
	title = {Combinatorial Commutative Algebra},
	publisher = {Springer},
	place = {New York},
}

\end{biblist}
\end{bibdiv}

\end{document}